\documentclass[a4paper,UKenglish,cleveref, autoref, thm-restate]{lipics-v2021}

\hideLIPIcs

\bibliographystyle{plainurl}

\title{A Simplified Proof for the Edge Density of 4-Planar Graphs}

\titlerunning{Edge Density of 4-Planar Graphs}

\author{Aaron B{\"u}ngener}{Universit{\"a}t T{\"u}bingen}{aaron.buengener@student.uni-tuebingen.de}{}{}

\authorrunning{A. B\"ungener}

\Copyright{Aaron B\"ungener}

\ccsdesc[500]{Mathematics of computing~Graph theory}
\ccsdesc[500]{Mathematics of computing~Combinatorics}

\keywords{$k$-planar graphs, edge density}

\nolinenumbers

\DeclareMathOperator{\ch}{ch}

\newcommand{\si}{\textsf{Step~1}}
\newcommand{\sii}{\textsf{Step~2}}
\newcommand{\siii}{\textsf{Step~3}}
\newcommand{\siv}{\textsf{Step~4}}
\newcommand{\sv}{\textsf{Step~5}}

\begin{document}

\maketitle

\begin{abstract}
A graph on $n \ge 3$ vertices drawn in the plane such that each edge is crossed at most four times has at most $6(n-2)$ edges -- this result, proven by Ackerman, is outstanding in the literature of beyond-planar graphs with regard to its tightness and the structural complexity of the graph class. We provide a much shorter proof while at the same time relaxing the conditions on the graph and its embedding, i.e., allowing multi-edges and non-simple drawings.
\end{abstract}

\section{Introduction}\label{sec:introduction}
A drawing $\Gamma$ of a graph $G = (V,E)$ is an embedding in the plane such that each vertex $v \in V$ is mapped to a distinct point and each edge $e \in E$ to a Jordan arc connecting the corresponding endpoints. We assume that (i) no vertex is an interior point of an edge, (ii) two edges have only finitely many intersection points, which are either proper crossings or a common endpoint, and (iii) at most two edges cross at a single point. We allow parallel edges and loops in $G$, if they are non-homotopic in $\Gamma$, i.e., they form a closed curve with at least one vertex inside and one outside. We denote by $P(\Gamma)$ the planarization of $\Gamma$, i.e., the vertices and crossing points of $\Gamma$ are the vertices of $P(\Gamma)$, while the edges of $P(\Gamma)$ are the crossing-free segments in $\Gamma$, which are bounded by vertices and crossing points.

A given graph $G$ is $k$-planar, if it has a drawing $\Gamma$, where each edge is involved in at most $k$ crossings. Such graphs find direct application in the well-known Crossing Lemma \cite{DBLP:books/daglib/0019107, bungener2024improving, DBLP:journals/combinatorica/PachT97}. For $k \le 3$, tight (up to a constant) upper bounds on the edge density, i.e., the maximum number of edges a graph $G$ on $n$ vertices can have, were shown ($k=1$: $4(n-2)$ \cite{von1983bemerkungen}; $k=2$: $5(n-2)$ \cite{DBLP:conf/compgeom/Bekos0R17, DBLP:journals/combinatorica/PachT97}; $k=3$: $5.5(n-2)$ \cite{DBLP:conf/compgeom/Bekos0R17, bungener2024improving, DBLP:journals/dcg/PachRTT06}).
For $k=4$, an explicit proof of the tight upper bound $6(n-2)$ has been given only for the case where $G$ is simple \cite{DBLP:journals/comgeo/Ackerman19}. Unfortunately, the proof relies on a massive discharging argument with lengthy case analysis.
Here, we considerably simplify the proof and extend it to the non-simple case.

Recently, the known upper bound for the edge density of simple 5-planar graphs has been drastically improved from $\approx 8.3n$ to $7n$ \cite{bungener2025first}. A key ingredient for the simplification of the case analysis was a separate treatment of empty triangular faces described by three pairwise crossing edges in the drawing.
We apply this idea to the 4-planar setting. For 4-planar graphs, it is known that the upper bound of $6(n-2)$ can be achieved by a planar hexagonalization where each hexagon $h$ is enhanced by nine further crossing edges inside $h$, see \cref{fig:hexagon-def}. 
We specify this pattern $H^*$ in \cref{sec:def} and will show that we can assume that an empty triangular face is always part of an $H^*$ (\cref{pro:0-triangle}).

Another source for simplification compared to \cite{DBLP:journals/comgeo/Ackerman19} is the fact that just a few types of faces can exist in $P(\Gamma)$ (\cref{pro:small-faces,pro:fill,pro:cr-minimal,pro:cr-minimal-2}) -- which only holds in our more general setting. This leads to a short proof of the following theorem.

\begin{theorem}\label{th:4-planar}
    Any connected 4-planar graph $G$ on $n \ge 3$ vertices has at most $6(n-2)$ edges.
\end{theorem}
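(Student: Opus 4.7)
The natural approach is a discharging argument on the planarization $P = P(\Gamma)$ of a carefully chosen drawing $\Gamma$ of $G$. I would pick $\Gamma$ realising the maximum edge count and, subject to that, the minimum number of crossings; the preparatory propositions \cref{pro:small-faces,pro:fill,pro:cr-minimal,pro:cr-minimal-2,pro:0-triangle} then reduce the possible face types of $P$ to a short list and force every empty triangular face of $P$ to sit inside an $H^*$-configuration. Writing $c$ for the number of crossings, $P$ has $n+c$ vertices and $e+2c$ edges, and Euler's formula yields $f = 2 + e + c - n$ faces, so the target $e \le 6(n-2)$ is equivalent to $f \le c + 5(n-2)$.

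With this reformulation, I would give each face $h$ of $P$ the initial charge $|h| - 4$ and leave all vertices uncharged. A short computation combining the handshake lemma with Euler's formula gives $\sum_h (|h|-4) = -2e + 4n - 8$, so proving $\sum_h (|h|-4) \ge -8(n-2)$ is literally equivalent to the theorem. Only faces of size at most three (loops, bigons, triangles) start with a negative charge, and the discharging rules must redistribute the surplus of faces of size $\ge 5$ to cover those deficits locally. I would bundle every $H^*$-configuration into a single unit: the surplus of the enclosing hexagon, together with the contributions of its nine internal crossings (which, through their incident large faces, carry considerable surplus), is pooled and dispatched across the many small faces inside $H^*$; this is precisely where the bound $6(n-2)$ becomes tight.

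The main obstacle I foresee is exactly this $H^*$-bookkeeping. Because the pattern is so dense, any looseness in how charge crosses the hexagon boundary will break the bound, so the transfer rules have to be simultaneously local and quantitatively exact. Triangles not contained in an $H^*$ are, by \cref{pro:0-triangle}, never empty and therefore carry at least one original vertex on their boundary, which allows their deficit to be paid off by a standard local argument using the restrictions on face types around vertices from \cref{pro:small-faces,pro:fill}; the remaining face-type analysis should close the argument in a small number of cases once the $H^*$-balance is established.
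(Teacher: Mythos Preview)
Your discharging setup has a structural gap. With initial charge $|h|-4$ per face and vertices uncharged, the total charge is exactly $-2e+4n-8$, and discharging preserves this sum; so ``proving $\sum_h(|h|-4)\ge -8(n-2)$'' is not something discharging can accomplish---it is literally the inequality $e\le 6(n-2)$ rewritten. To make discharging bite you need a per-face threshold $\theta(f)$ whose sum you control \emph{independently of $e$}. Your text suggests $\theta(f)=0$ (``cover those deficits''), but that is impossible: any $4$-planar graph with $e>2(n-2)$ has strictly negative total charge, so no redistribution makes every face nonnegative (concretely, every $1$-triangle---and there are many---starts at $-1$ with no nearby surplus tied to it). The paper sidesteps this by using $\ch(f)=|f|+v(f)-4$, whose total is $4n-8$ \emph{regardless of $e$}; the target $\ch'(f)\ge\tfrac13 v(f)$ then yields $4n-8\ge\tfrac13\sum_f v(f)=\tfrac23 e$ directly. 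The $v(f)$ term is not cosmetic: it is precisely what turns the global inequality into a verifiable local one.

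There is a second gap in your choice of $\Gamma$. You take $\Gamma$ edge-maximal and then crossing-minimal, and then invoke \cref{pro:0-triangle}. But the proof of \cref{pro:0-triangle} replaces the interior of the hexagon spanned by the three edges of a $0$-triangle with an $H^*$; this operation does \emph{not} reduce crossings, it increases the number of $H^*$ configurations. Hence the proposition is only available for a drawing that is $H^*$-maximal (subject to edge-maximality), with crossing-minimality demoted to a tertiary criterion---exactly the ordering the paper uses. With your ordering there is no reason every $0$-triangle sits inside an $H^*$, and your plan to handle all non-$H^*$ triangles as ``never empty'' collapses. One must then also check, as the paper does, that the reroutings in \cref{pro:cr-minimal,pro:cr-minimal-2} preserve the $H^*$ count, and set up the induction on $n$ needed for \cref{pro:2-connected}.
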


\section{Definitions}\label{sec:def}

Let $\Gamma$ be a 4-planar drawing of a graph $G=(V,E)$ and let $P(\Gamma)$ be the corresponding planarization.
For a face $f \in P(\Gamma)$, we define $\vert f \vert$ as the number of edges and $v(f)$ as the number of vertices of $G$ counted while traversing the boundary of $f$ (so double-counting is possible). We say that $f$ is a $v(f)$-face or, more specifically, a
$v(f)$-$\vert f \vert$-gon and use the terms \emph{$v(f)$-triangle, $v(f)$-quadrilateral} and \emph{$v(f)$-pentagon} for the cases of $\vert f \vert =3,4,5$ respectively.

\begin{figure}[t]
    \hfil
    \begin{subfigure}[b]{0.2\linewidth}
    \center
        \includegraphics[page=1, width = \linewidth]{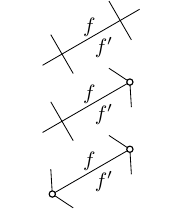}
        \subcaption{}
        \label{fig:definition-neighbors-a}
    \end{subfigure}  
    \hfil
    \begin{subfigure}[b]{0.2\linewidth}
    \center
        \includegraphics[page=2, width = \linewidth]{neighbors.pdf}
        \subcaption{}
        \label{fig:definition-neighbors-b}
    \end{subfigure} 
    \hfil
    \begin{subfigure}[b]{0.2\linewidth}
    \center
        \includegraphics[page=3, width = \linewidth]{neighbors.pdf}
        \subcaption{}
        \label{fig:definition-neighbors-c}
    \end{subfigure} 
    \hfil
        \caption{(Taken from \cite{bungener2024improving}.) Illustrations of the defined neighborhood relations. (a) From top to bottom: The faces $f$ and $f'$ are 0-neighbors, 1-neighbors, 2-neighbors, respectively. (b) The 0-pentagon $f_2$ is the wedge-neighbor of the 1-triangle $f_0$. (c) The faces $f$ and $f'$ are vertex-neighbors.}
        \label{fig:definition-neighbors}
\end{figure}

Several neighbor relationships in $P(\Gamma)$ will be useful later: We say that two faces $f,f'$ are \emph{$i$-neighbors} for $i \in \{0,1,2\}$ if they share in $P(\Gamma)$ an edge and $i$ vertices of $G$, see \cref{fig:definition-neighbors-a}.
Let $f_0 \in P(\Gamma)$ be a 1-triangle and $f_1$ its 0-neighbor. For $i\ge 1$, if $f_i$ is a 0-quadrilateral, then let $f_{i+1}$ be the 0-neighbor of $f$ at the edge opposite to $f_{i-1}$. For the largest such $i$ (which is $\le 4$ by 4-planarity), we call $f_i$ the \emph{wedge-neighbor} of $f_0$, see \cref{fig:definition-neighbors-b}. The set of all $f_i$ is the \emph{wedge} of $f_0$. Finally, we define two faces $f$ and $f'$ to be \emph{vertex-neighbors}, if they share a crossing $x$ but not an edge of $P(\Gamma)$ incident to $x$, see \cref{fig:definition-neighbors-c}.

Finally, we define $H^*$ to be 
a connected region of 19 faces in $\Gamma$ consisting of a central 0-triangle $t$, where all 0-neighbors of $t$ are 0-pentagons and all vertex-neighbors of $t$ are 0-quadrilaterals. Additionally, the two remaining 0-neighbors of the six 0-quadrilaterals and 0-pentagons are 1-triangles,
see \cref{fig:hexagon-def}. This definition is inspired by dense 2-planar and 3-planar configurations studied in \cite{bungener2024improving}.

\begin{figure}[t]
    \hfil
    \center
    \includegraphics[page=1, width = 0.3\linewidth]{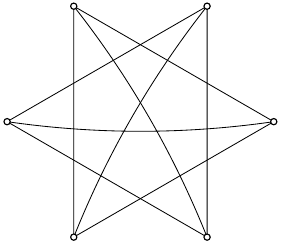}
    \hfil
    \caption{An $H^*$ configuration. The six vertices are not necessarily distinct; see, e.g., \cref{fig:hexagon-b}.}
    \label{fig:hexagon-def}
\end{figure}

\section{Proof of \cref{th:4-planar}}\label{sec:proof}

\begin{proof}

Let $\Gamma$ be a 4-planar drawing of an edge-maximal $n$-vertex graph $G$ that maximizes the number of $H^*$ configurations and, subject to this condition, minimizes the number of crossings.

We start by specifying which types of faces can occur in $P(\Gamma)$.
For that, we prove the theorem by induction on the number of vertices (this is needed for \cref{pro:2-connected}), 
so observe that for the base case $n = 3$ at most six non-homotopic edges are possible (three loops and three non-loop-edges). The next proposition is inspired by \cite[Proposition 2.1]{DBLP:journals/comgeo/Ackerman19}, but we have to adapt it to the non-simple case.

\begin{proposition}\label{pro:2-connected}
    Assume that $P(\Gamma)$ has a vertex $x$ that is a crossing in $\Gamma$ so that $P(\Gamma) \setminus {x}$ is not connected. Then $G$ has at most $6(n-2)$ edges.
\end{proposition}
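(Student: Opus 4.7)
The plan is to exploit the cut-vertex structure at $x$ to split the drawing $\Gamma$ into two smaller 4-planar drawings and apply induction on the number of vertices. Let $e_1 = ab$ and $e_2 = cd$ be the two edges of $G$ crossing at $x$, and consider the four half-edges (\emph{arms}) of $e_1,e_2$ at $x$. Since $G$ is connected, so is $P(\Gamma)$, so every component of $P(\Gamma) \setminus \{x\}$ must contain at least one of the four arms; by planarity the arms of a single component form a cyclically contiguous block around $x$. I would partition the components of $P(\Gamma) \setminus \{x\}$ into two nonempty groups $A$ and $B$.

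Next I would build two graphs from $A$ and $B$ whose drawings are inherited from $\Gamma$. Let $y$ be a new vertex placed at the location of $x$. Define $G_1$ to have vertex set $(V(G) \cap A) \cup \{y\}$ and edges given by the $G$-edges drawn entirely inside $A \cup \{x\}$ together with, for each arm of $e_1,e_2$ lying in $A$, a new edge from $y$ to the $A$-endpoint of that edge drawn along the arm; define $G_2$ symmetrically. The inherited drawings $\Gamma_1,\Gamma_2$ are 4-planar because no arc is crossed more often than in $\Gamma$. Writing $n_i = |V(G_i)|$, we get $n_1 + n_2 = n + 2$, and every edge of $G$ contributes $1$ to $|E(G_1)| + |E(G_2)|$ except $e_1,e_2$, each of which contributes $2$. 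Thus $|E(G_1)| + |E(G_2)| = |E(G)| + 2$, and assuming the inductive hypothesis applies to both $G_i$ we obtain
\[
|E(G)| + 2 \;=\; |E(G_1)| + |E(G_2)| \;\le\; 6(n_1 - 2) + 6(n_2 - 2) \;=\; 6(n-2),
\]
which is slightly stronger than claimed.

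The main obstacle will be verifying the two conditions needed to invoke induction: connectedness of $G_1, G_2$ and $n_i \ge 3$. For connectedness, any $v \in V(G) \cap A$ can be reached from $x$ by a $P(\Gamma)$-path inside $A \cup \{x\}$; by traversing this path and only switching edges at $G$-vertices, I get a walk in $G_1$ from $v$ to some $A$-endpoint of $e_1$ or $e_2$, and thence to $y$. The condition $n_i \ge 3$, i.e., $n_A, n_B \ge 2$, can fail only if one side, say $A$, contains a single $G$-vertex $v$; then the two $A$-arms both terminate at $v$, forcing one of $a=c$, $a=d$, $b=c$ or $b=d$. In that case the two $A$-arms are cyclically consecutive at $x$ and both lead to $v$, so a local rerouting at $x$ yields a drawing with strictly fewer crossings; a careful check that this rerouting creates no homotopic parallel edges and does not destroy any $H^*$ containing $x$ then contradicts the crossing-minimality of $\Gamma$. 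Partitions of $P(\Gamma) \setminus \{x\}$ into three or more components can be handled by iterating the split, and the bound only improves.
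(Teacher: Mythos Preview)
Your inductive split at $x$ is exactly the paper's approach: replace the crossing by a vertex, separate the drawing into two pieces whose vertex counts sum to $n+2$ and whose edge counts sum to $|E(G)|+2$, and apply the induction hypothesis. The main arithmetic is the same.

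There are, however, two real gaps relative to the paper.

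\emph{Non-homotopic multi-edges after the split.} You verify that $\Gamma_1,\Gamma_2$ are 4-planar, but you never argue that no homotopic parallel edges or trivial loops are created. This matters: in the paper's setting the induction hypothesis only applies to graphs whose multi-edges are non-homotopic in the drawing. A loop at $v\in A$ that in $\Gamma$ enclosed only $B$-vertices could become homotopically trivial in $\Gamma_1$. The paper handles this explicitly by observing that the new vertex placed at $x$ separates in $\Gamma'$ exactly those Jordan arcs that were separated in $\Gamma$ by vertices of the other side; you need an argument of this type.

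\emph{The degenerate case $n_i=2$.} Here you genuinely diverge from the paper, and your argument is incomplete. You assert that the small side $A$ carries ``the two $A$-arms'', but a single component of $P(\Gamma)\setminus\{x\}$ can contain one, two, or three of the four arms, and no choice of the partition $A\mid B$ guarantees a $2$--$2$ split (e.g.\ a $3$--$1$ component structure cannot be balanced). In the one-arm case there is no pair of consecutive arms meeting at $v$, so your local rerouting does not even get started. Even in the two-arm case you still owe the verification that the rerouting does not decrease the number of $H^*$ configurations, since $\Gamma$ is only crossing-minimal \emph{subject to} maximising that count; you flag this but do not carry it out. The paper sidesteps all of this by using edge-maximality instead of crossing-minimality: if $n''=2$ then $n'=n$ and $G'$ has strictly more edges than $G$, contradicting the choice of $G$. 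That argument is shorter and avoids the arm-count and $H^*$-preservation issues entirely; you should use it.
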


\begin{proof}
    Let $\tilde{\Gamma}$ be the drawing of the graph $\tilde{G}$ obtained by replacing the crossing $x$ by a vertex. This adds one vertex and two edges. Let $\tilde{\Gamma}_1, ..., \tilde{\Gamma}_k$ be the connected components of $\tilde{\Gamma} \setminus \{x\}$ with corresponding graphs $G_1, ..., G_k$. Let $\Gamma'$ be the drawing  induced by the vertices of $G_1$ and $x$, and let $\Gamma''$ be the drawing induced by the vertices of $G_2 \cup ... \cup G_k$ and $x$. Let $G',G''$ be the graphs corresponding to $\Gamma', \Gamma''$. Note that both have vertex $x$.

    Under this construction, no multi-edges of $G'$ or $G''$ are homotopic in $\Gamma'$ or $\Gamma''$, because the vertex $x$ separates the same Jordan arcs in $\Gamma'$ that were separated in $\Gamma$ by vertices of $G''$ and vice versa.

    Let $n',n''$ be the number of vertices of $G',G''$. By construction, we have $n'+n'' = n+2\ge 5$ and w.l.o.g. $n' \ge 3$ and $n''\ge 2$. If $n''=2$, we observe that $n=n'$ while $G'$ has one edge more than $G$. This contradicts that $G$ is maximally dense. If $n'' \ge 3$, we can apply induction and conclude that $G$ has at most $6n'-12 + 6n''-12-2 = 6(n+2)-26 \le 6n-12$ edges.
\end{proof}

Thus, if the boundary of a face is not a simple cycle, then it is a vertex of $G$ that appears multiple times.

\begin{proposition}\label{pro:small-faces}
    For each face $f \in P(\Gamma)$, we have $\vert f \vert \ge 3$.
\end{proposition}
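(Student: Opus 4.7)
The plan is to suppose for contradiction that some face $f$ of $P(\Gamma)$ has $|f|\in\{1,2\}$ and derive a contradiction, using the non-homotopy condition on loops and parallel edges, invoking \cref{pro:2-connected} to pin down the structure of $\partial f$, and falling back on crossing-minimality of $\Gamma$ only in the hardest sub-case.

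If $|f|=1$, the boundary of $f$ is a single edge of $P(\Gamma)$ forming a loop at some point $p$. A segment of an edge of $G$ is a Jordan arc whose endpoints are distinct (a segment that leaves a crossing cannot return to it), so $p$ must be a vertex of $G$, and the loop is an entire edge $e\in E(G)$ carrying no crossings. Then $e$ bounds the face $f$ on one side, whose interior contains no vertex of $G$, so $e$ is homotopic to a point in $\Gamma$, contradicting our standing non-homotopy assumption on loops.

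If $|f|=2$, the boundary of $f$ consists of two edges of $P(\Gamma)$ sharing both endpoints $a,b$. When $a,b$ are both vertices of $G$ (including the degenerate case $a=b$, which by \cref{pro:2-connected} must actually be a vertex of $G$), the two boundary edges are either a pair of parallel edges or a pair of parallel loops of $G$, and in each case they enclose the vertex-free region $f$, contradicting non-homotopy. Otherwise at least one of $a,b$ is a crossing, and the two boundary edges are segments of two distinct edges $\alpha,\beta\in E(G)$ that together form an empty ``lens''. I would then locally redraw $\alpha$ and $\beta$ by swapping the two boundary arcs of $f$: the underlying graph $G$ is unchanged, but one or two crossings are removed, producing a drawing with strictly fewer crossings than $\Gamma$.

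The main obstacle is ensuring that this lens-swap does not drop the number of $H^*$ configurations, for otherwise crossing-minimality would not actually be violated. The key point will be that every face of an $H^*$ has size at least three, so the $2$-face $f$ itself is not contained in any $H^*$; combined with the fact that the redraw only touches faces meeting $\partial f$, a short case check should show that no $H^*$ present in $\Gamma$ is destroyed, yielding the desired contradiction.
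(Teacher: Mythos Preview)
Your proposal is correct and follows essentially the same case split as the paper: rule out $1$-gons via the Jordan-arc and non-homotopic-loop conditions, rule out $2$-$2$-gons via the ban on homotopic parallel edges, and rule out $0$-$2$-gons and $1$-$2$-gons by swapping the two lens arcs to reduce crossings. Two minor remarks: (i) your detour through \cref{pro:2-connected} for a hypothetical $a=b$ case is unnecessary, since an edge of $P(\Gamma)$ is a Jordan arc and hence has distinct endpoints; (ii) the concern you single out about the lens-swap possibly destroying an $H^*$ is legitimate given the priority order in which $\Gamma$ is chosen, and the paper indeed glosses over it here (while explicitly noting ``the numbers of edges and $H^*$ configurations do not change'' in the analogous \cref{pro:cr-minimal,pro:cr-minimal-2}). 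Your sketched resolution---that every face of an $H^*$ has size at least three, so the $2$-face lies outside every $H^*$ and the local swap does not touch any $H^*$ face---is the right idea and suffices.
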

\begin{proof}
A 0-1-gon implies a self-crossing edge, which is forbidden as the edges are Jordan arcs. A 1-1-gon is created by an homotopic loop and a 2-2-gon by homotopic multi-edges, which both are not allowed. In the case that $f$ is a 0-2-gon or a 1-2-gon, we have a contradiction to the crossing-minimality of $\Gamma$ by swapping the two edge segments forming $f$.
\end{proof}

\begin{proposition}\label{pro:fill}
    A face $f \in P(\Gamma)$ with $ v(f) > 2$ is a 3-triangle.
\end{proposition}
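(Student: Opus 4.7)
My plan is a proof by contradiction. Assume $v(f) > 2$ yet $f$ is not a 3-triangle; since $v(f) \le |f|$, this forces $|f| \ge 4$. The main move is to add a \emph{chord} $c$ through the open region of $f$ between two $G$-vertices $u, v$ on $\partial f$. Such a chord crosses no existing edge, so 4-planarity is preserved. If $uv \notin E(G)$, adding $c$ immediately contradicts edge-maximality. If $e = uv \in E(G)$ already, then adding $c$ as a parallel edge requires $c$ and $e$ to be non-homotopic, i.e., that the closed curve $c \cup e$ has a vertex of $G$ on each side.

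If $v(f) \ge 4$, I would pick $u,v$ on $\partial f$ such that both arcs of $\partial f$ from $u$ to $v$ contain a further $G$-vertex. This is possible by a simple pigeonhole argument on the cyclic sequence of the $\ge 4$ vertex occurrences (e.g.\ take the first and third occurrence). The chord $c=uv$ then splits $f$ into two subfaces, each containing a different $G$-vertex on its boundary, so if $e=uv$ exists, these two witness vertices lie on opposite sides of the closed curve $c\cup e$, certifying non-homotopy and giving a legal parallel edge.

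If $v(f) = 3$, let $v_1, v_2, v_3$ be the $G$-vertices on $\partial f$ in cyclic order. Since $|f| \ge 4$, some arc of $\partial f$ between consecutive $v_i$'s contains a crossing; up to relabelling, the arc from $v_3$ to $v_1$ does (the other pairs are separated by a single boundary edge, and a chord parallel to it bounds an empty thin strip with no $G$-vertex inside, hence would be homotopic). I take $c = v_1 v_3$. If $v_1v_3 \notin E(G)$, done. Otherwise, let $e = v_1v_3 \in E(G)$ and re-route $e$ along $c$'s path inside $f$; the re-routed copy of $e$ has $0$ crossings, and is isotopic to the original $e$ so that non-homotopy to any other $v_1v_3$-parallel is preserved. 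If $e$ originally had at least one crossing, this strictly decreases the total number of crossings of $\Gamma$, contradicting its crossing-minimality.

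The main obstacle is the residual sub-case in which $v(f) = 3$, $|f| \ge 4$, and the parallel $e = v_1v_3$ already has $0$ crossings. Then $c$ and $e$ are two crossing-free arcs from $v_1$ to $v_3$ bounding a bigon that, by the (failed) non-homotopy condition, contains no $G$-vertex in its interior, even though the crossing on the $v_3 v_1$-arc of $\partial f$ lies inside it. I expect to close this case by a structural argument: any crossing inside the bigon must arise from an edge that cannot exit the bigon through $c$ (an uncrossed chord) or through $e$ (crossing-free), so all such edges have both endpoints in $\{v_1, v_3\}$. This forces a stack of pairwise-crossing $v_1 v_3$ multi-edges confined to the bigon, whose mutual non-homotopy requirements together with the 4-planarity budget at $v_1$ and $v_3$ cannot all be satisfied, delivering the final contradiction.
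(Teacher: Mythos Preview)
The paper's proof is two sentences and uses only edge-maximality: pick any three boundary vertices $v_1,v_2,v_3$ of $f$; since $|f|\ge 4$, some pair $v_iv_j$ is not a boundary edge of $f$, so draw that chord inside $f$; it is declared non-homotopic to any existing parallel (the remaining vertex $v_k$ sits on one side of the chord inside $f$), contradicting edge-maximality. No re-routing, no crossing-minimality, no bigon analysis.

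Your $v(f)\ge 4$ case is fine and in fact more explicit than the paper's. Your $v(f)=3$ case, however, takes an unnecessary detour and contains genuine gaps. First, the assertion that the re-routed $e$ is ``isotopic to the original $e$'' is unjustified and generally false: moving $e$ onto the chord's track can change its homotopy class relative to \emph{other} $v_1v_3$-parallels (for instance $v_2$ may lie between the old and new tracks). Second, in this paper crossing-minimality is only assumed \emph{subject to} maximizing the number of $H^*$ configurations, so any crossing-reduction argument must also verify that no $H^*$ is destroyed; you do not address this. Third, your residual bigon sketch cites the wrong mechanism: the ``4-planarity budget at $v_1$ and $v_3$'' is irrelevant (4-planarity bounds crossings per edge, not degree); the actual contradiction is that any two $v_1v_3$-arcs confined to a vertex-free bigon are automatically homotopic to one another and to $e$.

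The clean fix is to drop the re-routing entirely and argue non-homotopy of the chord $c=v_1v_3$ directly, as the paper does. The vertex $v_2$ lies on one side of $c$ inside $f$, so for any existing parallel $e'$ the curve $c\cup e'$ already has $v_2$ on one side; your own bigon observation (edges trapped in a vertex-free region bounded by the uncrossed $c$ and $e'$ would be homotopic to $e'$, impossible) then covers the other side if one wants to be fully explicit. This uses only edge-maximality, exactly as in the paper.
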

\begin{proof}
    Let $v_1, v_2, v_3$ be the (not necessarily distinct) vertices of $f$. If $\vert f \vert \ge 4$, then not all edges $v_iv_j$ with $i,j \in \{1,2,3\}$ are present on the boundary of $f$. Hence, we may insert such an edge (which is not homotopic to any other edge), contradicting that $G$ is edge-maximal.
\end{proof}

\begin{figure}[t]
    \hfil
    \begin{subfigure}[b]{0.3715\linewidth}
    \center
        \includegraphics[page=1, width = \linewidth]{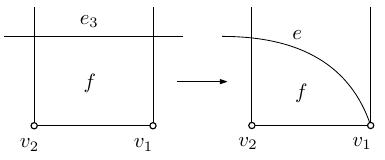}
        \subcaption{}
        \label{fig:face-reduction-a}
    \end{subfigure}
    \hfil
    \begin{subfigure}[b]{0.21\linewidth}
    \center
        \includegraphics[page=3, width = \linewidth]{rerouting.pdf}
        \subcaption{}
        \label{fig:face-reduction-b}
    \end{subfigure} \\
    \hfil
    \begin{subfigure}[b]{0.3715\linewidth}
    \center
        \includegraphics[page=2, width = \linewidth]{rerouting.pdf}
        \subcaption{}
        \label{fig:face-reduction-c}
    \end{subfigure} 
    \hfil
    \begin{subfigure}[b]{0.21\linewidth}
    \center
        \includegraphics[page=4, width = \linewidth]{rerouting.pdf}
        \subcaption{}
        \label{fig:face-reduction-d}
    \end{subfigure} 
    \hfil
    \begin{subfigure}[b]{0.21\linewidth}
    \center
        \includegraphics[page=5, width = \linewidth]{rerouting.pdf}
        \subcaption{}
        \label{fig:face-reduction-e}
    \end{subfigure} 
    \hfil
        \caption{Illustrations for (a)--(b) \cref{pro:cr-minimal} and (c)--(e) \cref{pro:cr-minimal-2}.}
        \label{fig:face-reduction}
\end{figure}

\begin{proposition}\label{pro:cr-minimal}
    A face $f \in P(\Gamma)$ with $ v(f) = 2$ is a 2-triangle.
\end{proposition}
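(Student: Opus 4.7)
The plan is to argue by contradiction: assume $|f| \ge 4$, and let $u,v$ be the two (possibly equal) vertices of $G$ on $\partial f$, so that $\partial f$ has at least two crossings. The key leverage is that the interior of $f$ is crossing-free, which allows local rerouting at no additional cost. I would split into two cases depending on whether $u$ and $v$ are joined along $\partial f$ by a direct edge.

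If $u$ and $v$ are adjacent along $\partial f$ via a direct edge $e=uv$, then the opposite boundary arc has at least three segments and therefore contains a ``middle'' segment belonging to some edge $c$ whose $\partial f$-segment is flanked by two crossings $x,y$. I would push this segment of $c$ through the crossing-free interior of $f$ and just past $e$, forcing $c$ to cross $e$ twice while removing the crossings of $c$ at $x$ and $y$. The two new $c$-$e$ crossings delimit a lens whose boundary contains no $G$-vertex, i.e., a 0-2-gon; an uncrossing move of the same kind used in the proof of \cref{pro:small-faces} resolves this lens and eliminates both of these new crossings without altering $G$. The net effect is a decrease of at least two crossings, contradicting crossing-minimality.

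If $u,v$ are not adjacent along $\partial f$, then each of the two arcs of $\partial f$ contains a crossing. I would attempt to insert a new $uv$-edge $e^*$ (or a loop at $u$ when $u=v$) through the interior of $f$; by construction $e^*$ has no crossings. If $e^*$ is non-homotopic to every existing $uv$-edge in $\Gamma$, then $G+e^*$ admits a valid drawing with one more edge than $G$, contradicting edge-maximality in the same spirit as \cref{pro:fill}. Otherwise $e^*$ is homotopic to some existing $uv$-edge $e$, and $e\cup e^*$ bounds a vertex-free disk $D$. A parity argument on the number of crossings of each other edge with $\partial D$, combined with \cref{pro:small-faces} (which rules out 2-gons between a pair of edges that cross twice), forces $e$ to be itself crossing-free; the geometry of $D$ is then constrained enough to force a forbidden small face inside $D$, again contradicting \cref{pro:small-faces}.

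The main obstacle will be the non-adjacent case with a homotopic crossing-free edge $e$: the detailed analysis of the strip between $e$ and $\partial f$ inside $D$ is delicate, as the faces in this strip are not a priori 2-gons, so one must invoke the 4-planarity bound and the structure forced by the previous propositions to pin down the possibilities. A secondary verification is that the modification does not destroy any $H^*$ configuration (which would invalidate the crossing-minimality contradiction); this holds because the rerouting is confined to $f$ and a small adjacent region, and a 2-quadrilateral is not itself part of any $H^*$.
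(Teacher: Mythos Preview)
Your Case~1 does not work as described. The segment of $c$ you want to push has its endpoints at the crossings $x$ and $y$; any rerouting of that segment through the interior of $f$ still has to start at $x$ and end at $y$, so the crossings of $c$ with $e_2$ at $x$ and with $e_4$ at $y$ remain. Pushing the segment across $e$ and then uncrossing the resulting $c$--$e$ lens just brings the segment back inside $f$, with zero net change in the crossing count. To actually eliminate the crossings at $x$ and $y$ you would have to reroute the external portions of $c$ around the vertices $v_1,v_2$, and there is no control on how many edges that detour meets. The paper's move is different and avoids this entirely: it changes an endpoint of $c=e_3$, replacing the portion of $e_3$ from $x$ onward by the single crossing-free segment $(x,v_1)$ inside $f$. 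This drops the crossing at $y$ (and everything beyond it on that side of $e_3$) for free; one then only has to argue that the new $u v_1$ edge is not homotopic to an existing one, which follows from crossing-minimality.

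Your Case~2 is left open: you correctly isolate the homotopic subcase as the obstacle, but the sketch (``parity argument \dots\ forces $e$ crossing-free \dots\ forbidden small face inside $D$'') is not an argument, and you yourself flag it as delicate. In fact this subcase dissolves quickly and does not require 4-planarity or any strip analysis: if a planar $e^*=v_1v_2$ through $f$ were homotopic to an existing edge $e$, then $e$ would have to be crossing-free too (else swap with $e^*$), and the vertex-free disk bounded by $e\cup e^*$ would contain a crossing of $\partial f$ on the $e^*$-side, giving an edge that enters the disk but cannot exit through either crossing-free boundary arc --- contradiction. So the edge can always be inserted, and one is back in Case~1. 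This is exactly how the paper proceeds: first ensure $v_1v_2$ lies on $\partial f$, then perform the endpoint-rerouting above.
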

\begin{proof}
    Let $f$ be a face with vertices $v_1,v_2$ and assume that $\vert f \vert \ge 4$. As in the proof above, we can add the planar edge $v_1v_2$ if not present on the boundary of $f$ and therefore assume its existence. Let $e_i$, $i \in \{1,2,3, ... \vert f \vert\},$ denote the edges on the boundary of $f$ starting with $e_1 = v_1v_2$. Because of $\vert f \vert \ge 4$, the edge segment of $e_3$ at $f$ is not incident to $v_1$.
    
    Let $x$ resp., $y$ be the crossings of $e_2$ and $e_3$  resp., $e_3$ and $e_4$ and let $u,v$ be the end-vertices of $e_3$, so that $x$ is visited before $y$ when walking along $e_3$ starting at $u$. 
    We replace $e_3$ by an edge $e$ that consists of the edge segments of $e_3$ between $u$ and $x$ and the segment $(x,v_1)$, see \cref{fig:face-reduction-a}. While this may introduce a multi-edge or a loop, the resulting edge $e$ can not be homotopic to an existing edge. To see this, assume that $e$ and an edge $e'$ are homotopic in the new drawing. Then, in $\Gamma$, $e$ and $e_3$ must cross, and swapping of their edge segments between their common vertex and the crossing would yield a drawing with fewer crossings, see \cref{fig:face-reduction-b}.
    
    The resulting drawing has fewer crossings, contradicting the assumption of crossing-minimality (while the numbers of edges and $H^*$ configurations do not change).
\end{proof}

Combining this with \cref{pro:2-connected}, we know that the boundary of all faces except 2-triangles and 3-triangles are simple cycles. The non-simple 2-triangles have exactly one vertex on its boundary, which is connected by a loop. There is no need to handle this type separately, as all following arguments work the same for the two types of 2-triangles. Non-simple 3-triangles can be treated the same way.

\begin{proposition}\label{pro:cr-minimal-2}
    A face $f \in P(\Gamma)$ with $ v(f) = 1$ is a 1-triangle or 1-quadrilateral.
\end{proposition}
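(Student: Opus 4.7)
The plan is to mimic the rerouting argument of \cref{pro:cr-minimal}, adapted to the one-vertex face. Since \cref{pro:small-faces} has already ruled out $|f|\le 2$, I assume for contradiction that $|f|\ge 5$. Label the boundary segments of $f$ in cyclic order $e_1,\ldots,e_k$ with $k=|f|$, so that $e_1$ and $e_k$ are the two segments incident to $v$ and $e_i,e_{i+1}$ meet at a crossing $x_i$. Because $k\ge 5$, the segment $e_3$ is bounded by the two crossings $x_2,x_3$ and is not incident to $v$. Write $E_3$ for the $G$-edge carrying $e_3$ and $u,w$ for its endpoints, labelled so that $x_2$ precedes $x_3$ when $E_3$ is traversed from $u$.

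Exactly as in \cref{pro:cr-minimal} (compare \cref{fig:face-reduction-a}), I would replace $E_3$ by a new edge $e$ from $u$ to $v$ obtained by concatenating the piece of $E_3$ from $u$ to $x_2$ with a simple Jordan arc drawn in the interior of $f$ from $x_2$ to $v$. The arc exists because \cref{pro:2-connected} guarantees that the boundary of $f$ is a simple cycle, so $f$ is an open disk having both $x_2$ and $v$ on its boundary. This defines a modified drawing $\Gamma'$ of a graph $G'$ on the same vertex set and with the same number of edges as $G$.

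I would then verify three standard properties. For the crossing count, every crossing of $E_3$ strictly beyond $x_2$ -- in particular $x_3$ -- disappears, while the new arc inside $f$ introduces no new crossings; hence $\Gamma'$ has strictly fewer crossings than $\Gamma$. This also gives $4$-planarity: $e$ inherits at most three crossings from the surviving $u$-to-$x_2$ part of $E_3$, and no other edge has its crossing count increased. For homotopic multi-edges, if the new edge $e$ were homotopic in $\Gamma'$ to some already present edge $e'$ between $u$ and $v$, the swap argument from \cref{pro:cr-minimal} (see \cref{fig:face-reduction-b}) transfers verbatim: a swap in $\Gamma$ between $e'$ and the piecewise curve underlying $e$ would already have produced a drawing with fewer crossings, contradicting minimality.

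The third point -- controlling the number of $H^*$ configurations -- is the main obstacle. The face $f$ itself cannot belong to any $H^*$, because an $H^*$ consists exclusively of $0$-triangles, $0$-quadrilaterals, $0$-pentagons, and $1$-triangles, and $|f|\ge 5$ excludes every one of these. The delicate issue is whether an $H^*$ sitting on the opposite side of the removed tail of $E_3$ could be destroyed by the rerouting; here I expect to use the rigid combinatorics of an $H^*$ to show that the existence of such an $H^*$ would force the face across $e_3$ from $f$ (and hence $f$ itself) into a small face type incompatible with $|f|\ge 5$. Granted this, $\Gamma'$ has at least as many $H^*$ configurations as $\Gamma$ and strictly fewer crossings, contradicting the extremal choice of $\Gamma$. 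The first two verifications are routine adaptations of \cref{pro:cr-minimal}; I expect this last book-keeping step to be by far the most subtle part of the proof.
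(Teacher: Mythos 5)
Your construction is essentially the paper's: assume $|f|\ge 5$, pick a boundary segment of $f$ not incident to $v$, cut the underlying edge at one of its crossings on $\partial f$, and reroute it through the interior of $f$ to $v$, contradicting crossing-minimality. The paper reroutes the segment $e_2$ adjacent to the $v$-incident segment, cutting at its crossing with $e_1$, whereas you reroute $e_3$ and cut at its crossing with $e_2$; this is an immaterial difference. One caution on the homotopy check: it does \emph{not} transfer verbatim from \cref{pro:cr-minimal} -- because the terminal arc of the new edge now runs past an additional crossing of $\partial f$, the paper needs a second case (an edge $e'$ crossing $e_4$ twice, \cref{fig:face-reduction-e}) in addition to the swap of \cref{fig:face-reduction-b}. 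Your construction would need the analogous two-case analysis, but this is routine.

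The step you flag as ``the most subtle part'' -- that no $H^*$ configuration is destroyed -- is in fact the easiest, and your proposed route (forcing the face across $e_3$ into a small type) is not the right mechanism. By \cref{pro:fill}, every $H^*$ is enclosed by a cycle of planar (crossing-free) edges $v_iv_{i+1}$, so no edge segment from outside that hexagon can enter it, and every edge with a segment inside an $H^*$ has all four of its crossings inside that $H^*$. Since $f$ has $v(f)=1$ and $|f|\ge 5$, it is not one of the nineteen faces of any $H^*$, so the edge you reroute is not an interior edge of any $H^*$ and its deleted tail never meets an $H^*$ region; likewise the new arc stays inside $f$. Hence the set of $H^*$ configurations is untouched, the edge count is unchanged, and the crossing number strictly drops, giving the desired contradiction. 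With that observation supplied (the paper itself dispatches it in a single clause), your argument is complete and matches the paper's.
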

\begin{proof}
    Let $f$ be a face with vertex $v$ and assume that $\vert f \vert \ge 5$. Let $e_i$, $i \in \{1,2,3,...,\vert f \vert\}$ denote the edges on the boundary of $f$ in clockwise order, starting with $e_1$ incident to $v$.
    Then $e_2$ is not incident to $v$ at $f$.

    Let $x$ resp., $y$ be the crossings of $e_1$ and $e_2$ resp., $e_2$ and $e_3$ and let $u,v$ be the end-vertices of $e_2$, so that $x$ is visited before $y$ when walking along $e_2$ starting at $u$.   
    We replace $e_2$ by an edge $e$ that consists of the edge segments of $e_2$ between $u$ and $x$ and the segment $(x,v)$, see \cref{fig:face-reduction-c}. While this may introduce a multi-edge or a loop, the resulting edge $e$ can not be homotopic to an existing edge. To see this, assume that $e$ and an edge $e'$ are homotopic in the new drawing. Then, in $\Gamma$, either $e_2$ and $e'$ cross in a way that is not crossing-minimal, see \cref{fig:face-reduction-d}, or $e'$ has two crossings with $e_4$ and swapping the edge segments between the crossings would yield a drawing with fewer crossings, see \cref{fig:face-reduction-e}.
    
    The resulting drawing has fewer crossings and the same number of edges and $H^*$ configurations, thus we found a contradiction to crossing-minimality.
\end{proof}

The crucial observation that leads to the simplification of the proof compared to \cite{DBLP:journals/comgeo/Ackerman19} is the next proposition.

\begin{proposition}\label{pro:0-triangle}
    Every 0-triangle $t \in P(\Gamma)$ is part of an $H^*$ configuration.
\end{proposition}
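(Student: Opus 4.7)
The plan is to start from an arbitrary 0-triangle $t$ and reconstruct the whole $H^*$ configuration face by face, using \cref{pro:small-faces,pro:fill,pro:cr-minimal,pro:cr-minimal-2} to restrict the possible types of neighboring faces and invoking the extremality of $\Gamma$ (edge-maximal with the maximum number of $H^*$, and crossing-minimal under these conditions) to rule out the remaining cases. Denote the corners of $t$ by the crossings $x_1,x_2,x_3$, and let $a_1,a_2,a_3$ be the three $G$-edges whose segments form the sides of $t$, with $x_i = a_i \cap a_{i+1}$ (indices mod $3$); the three $a_i$ are pairwise distinct. Each $a_i$ uses two of its at most four allowed crossings on $\partial t$, so it has at most two further crossings available. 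In the target $H^*$ every $a_i$ actually realises exactly two more crossings and then terminates in a $G$-vertex, so a tight crossing budget on $a_1,a_2,a_3$ will do most of the work.

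The first step is to pin down the three 0-neighbors of $t$. Let $f$ be the 0-neighbor across the side on $a_1$; its two corners are $x_3,x_1$, and its boundary leaves these along $a_3$ and $a_2$ respectively. Since $f$ shares no $G$-vertex with $t$, we have $v(f) = 0$, and by \cref{pro:small-faces} $f$ is a 0-$k$-gon with $k \ge 3$. The core sub-step is to exclude $k \in \{3,4\}$: a 0-triangle would force $a_2$ and $a_3$ to cross a second time immediately outside $t$, and a 0-quadrilateral would force another edge to cross both $a_2$ and $a_3$ close to $t$. In each case I would apply a local rerouting in the spirit of \cref{pro:cr-minimal,pro:cr-minimal-2} that either yields a drawing with strictly fewer crossings and the same number of $H^*$, or produces an additional $H^*$, contradicting the extremality of $\Gamma$. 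The remaining crossing budget of $a_2,a_3$ then caps $k$ at $5$, so $f$ is a 0-pentagon. An entirely analogous case analysis shows that the vertex-neighbor of $t$ at each crossing $x_i$ must be a 0-quadrilateral.

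At this point every $a_i$ has used all four of its crossings, so it must terminate in a $G$-vertex just outside the first ring. I would then examine the (up to twelve) outer 0-neighbors of the six first-ring faces. Each such face contains exactly one $G$-vertex, namely an endpoint of some $a_i$, so it is a 1-face and, by \cref{pro:fill,pro:cr-minimal-2}, is either a 1-triangle or a 1-quadrilateral. A crossing-count on the edges forming its boundary (all of which are already saturated) rules out the 1-quadrilateral case, leaving the 1-triangles needed to close up $H^*$.

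The hardest part will be the rerouting step that excludes 0-triangles and 0-quadrilaterals as 0-neighbors of $t$ in the non-simple setting: each rerouting may create a new multi-edge or loop that must be shown to be non-homotopic to every existing edge, exactly as in \cref{pro:cr-minimal,pro:cr-minimal-2}, and the modification has to preserve every pre-existing $H^*$ configuration so that the comparison with the extremal $\Gamma$ remains valid. Coordinating these local modifications consistently for all three 0-neighbors and all three vertex-neighbors of $t$, and then threading the remaining two-crossing budget of $a_1,a_2,a_3$ through the outer ring, is the substantive content of the proof.
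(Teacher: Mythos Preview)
Your plan takes a fundamentally different route from the paper, and misses the one-line idea that makes this proposition easy. The paper does \emph{not} analyse the faces around $t$ at all. It argues by contradiction: if $t$ is not already the centre of an $H^*$, look at the (possibly non-simple) hexagon $h$ spanned by the three edges $e_1,e_2,e_3$ forming $t$, delete every edge that lies (even partially) in $h$, and redraw an $H^*$ inside $h$. Any deleted edge must cross some $e_i$, and since each $e_i$ already spends two of its four crossings on the other two, the three edges together admit at most $3\cdot 2 = 6$ further crossings; counting $e_1,e_2,e_3$ themselves, at most nine edges are removed, while the $H^*$ restores exactly nine. So the edge count does not drop and the number of $H^*$ strictly increases, contradicting the extremal choice of $\Gamma$. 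This is precisely why the drawing was chosen to maximise the number of $H^*$ configurations before minimising crossings: the condition is there to be spent in one global replacement, not to back up local reroutings.

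Your face-by-face reconstruction has concrete gaps. First, the claim that ``the remaining crossing budget of $a_2,a_3$ caps $k$ at $5$'' is not correct as stated: whatever $k$ is, the $0$-neighbour across $a_1$ contributes exactly one additional crossing to each of $a_2$ and $a_3$ (namely the corner where its boundary leaves those edges), so nothing prevents $k\ge 6$ from the budget of $a_2,a_3$ alone. Second, the rerouting you invoke to exclude a $0$-quadrilateral neighbour is never specified, and there is no evident local move that reduces crossings here while preserving every existing $H^*$; you would effectively be redoing Ackerman's long case analysis, which is what this proposition is meant to avoid. Third, even if all neighbouring face types came out as desired, you would still need to argue that the $19$ faces assemble into the specific incidence pattern of $H^*$, not merely have the right individual types. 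The replacement argument bypasses all of this in one stroke.
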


\begin{proof}
    Assume that $t$ is not part of an $H^*$. Let $e_1, e_2, e_3 \in E$ be the three edges that form $t$. These edges are distinct, as self-crossings are forbidden. Let $v_i, i\in\{1,...,6\}$ be the endpoints of $e_1,e_2,e_3$, which are not necessarily distinct but nevertheless define a (non-simple), empty hexagon $h$ as following:
    Consider all $v_i$ as different vertices, even if one $v_i$ may appear multiple times as an endpoint of edges $e_1, e_2, e_3$. Then we choose an arbitrary tree $T$ on these six vertices using only edge segments of edges $e_i$. Such a tree exists as all edge segments of the edges $e_i$ are connected in $P(\Gamma)$. Now define $h$ as the region that closely surrounds $T$, see \cref{fig:hexagon-a}.     
    
    Replace the edges that lie (partially) in $h$ by an $H^*$ configuration, see \cref{fig:hexagon-b}. All the replaced edges must cross one of $e_1,e_2,e_3$ and by 4-planarity, there are at most nine of them. Thus, this procedure does not decrease the number of edges, while it increases the number of $H^*$ configurations.
\begin{figure}[t]
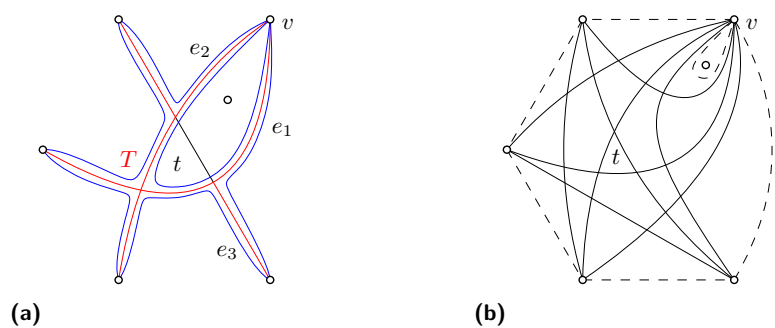

    \hfil
    \begin{subfigure}[b]{0.3\linewidth}
    \center
        \includegraphics[page=2, width = \linewidth]{hexagon.pdf}
        \subcaption{}
        \label{fig:hexagon-a}
    \end{subfigure}  
    \hfil
    \begin{subfigure}[b]{0.3\linewidth}
    \center
        \includegraphics[page=3, width = \linewidth]{hexagon.pdf}
        \subcaption{}
        \label{fig:hexagon-b}
    \end{subfigure} 
    \hfil
        \caption{(a) The three edges forming $t$ define a (non-simple) hexagon $h$ (blue) by following closely the edge segments of a tree $T$ (red) of $P(\Gamma)$. If $e_1,e_2$ have a common endpoint $v$, then they must enclose a further vertex, as otherwise the number of crossings could be reduced. (b) The same vertices and triangle $t$ after inserting the $H^*$ configuration. The dashed edges must exist by \cref{pro:fill}.}
        \label{fig:hexagon}
\end{figure}
\end{proof}

For the main part of the proof, we use the discharging method \cite{ackerman2006maximum,DBLP:journals/comgeo/Ackerman19}. First, we assign $\ch(f) = \vert f \vert +  v(f) -4$ charge to the faces of $P(\Gamma)$, thereby a total charge of $4n-8$ is distributed \cite{DBLP:journals/jct/AckermanT07}. Then, we redistribute the charge so that each edge of $G$ receives at least $\frac23$ and no face in $P(\Gamma)$ has a negative charge. From this we will conclude that the number of edges in $G$ is at most $\frac{4}{2/ 3}(n-2) = 6(n-2)$. 
More precisely, we will show equivalently that the final charge $\ch'(f)$ of every face is at least $\frac{1}{3}v(f)$ and do not transfer the charge explicitly to the edges. Initially, all faces except 0-triangles and 1-triangles have enough charge.

\medskip

\noindent \textbf{Discharging:}
We will distribute the charge respecting the property that charge is never moved through planar edges of $\Gamma$. Therefore, we can study the charge of the faces that lie inside and outside a cycle of planar edges separately. For $H^*$ configurations (which are surrounded by a planar cycle by \cref{pro:fill}), we simply observe that the total charge given and needed both is eight and thus a valid reassigning is possible, see \cref{fig:hexagon-charging}.
\begin{figure}[t]
    \hfil
    \begin{subfigure}[b]{0.30\linewidth}
    \center
        \includegraphics[page=11, width = \linewidth]{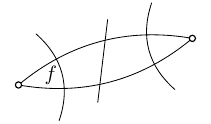}
        \subcaption{}
        \label{}
    \end{subfigure}  
    \hfil
    \begin{subfigure}[b]{0.30\linewidth}
    \center
        \includegraphics[page=12, width = \linewidth]{discharging4planar.pdf}
        \subcaption{}
    \end{subfigure} 
    \hfil
    \caption{(a) Initial and (b) redistributed charges of an $H^*$ configuration.}
    \label{fig:hexagon-charging}
\end{figure}

For all faces that are not part of an $H^*$ configuration (which excludes 0-triangles by \cref{pro:0-triangle}), we state explicit discharging rules.
The redistribution of charge takes place in five steps and we denote by $\ch_i(f)$ the charge of a face $f$ after the $i$-th step, where $\ch'(f) := \ch_5(f)$ is the final charge.
We say that a face $f$ is \textit{satisfied} after step $i$, if $\ch_i(f) \ge \frac13 v(f)$ and define the \textit{excess} to be $\ch_i(f) - \frac13 v(f)$.
The initial charge $\ch(f)$ is redistributed according to the following steps:
\begin{itemize}
    \item \si: Each 1-triangle $f$ receives $\frac{1}{9}$ charge from its 1-neighbors that are 2-triangles or 1-quadrilaterals. If the two 1-neighbors of $f$ are one 2-triangle and one 1-quadrilateral, $f$ only receives charge from the 2-triangle.
    \item \sii: Each 1-triangle $f$ whose 1-neighbors $f_1,f_2$ are 1-triangles receives $\frac{1}{18}$ charge from each 1-neighbor of $f_1,f_2$ that is not $f$.
    \item \siii: Each 1-triangle $f$ receives $\frac13 - \ch_2(f)$ charge from its wedge-neighbor.
    \item \siv: Each face $f$ distributes its excess equally over all vertex-neighbors that are 0-pentagons.
    \item \sv: Repeat \siv.
\end{itemize}
As in the last two steps only excesses are contributed, it suffices to show $\ch_3(f) \ge \frac13 v(f)$. As a preparation, we state some facts about the discharging steps. 

\begin{proposition}\label{pro:step2}
    Only 2-triangles contribute in \sii.
\end{proposition}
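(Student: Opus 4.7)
The plan is to localize the structure around the vertex common to $f$, $f_1$, and $f_2$, and then show by symmetry that any 1-neighbor $g$ of $f_1$ other than $f$ must be a 2-triangle. Since $f, f_1, f_2$ are 1-triangles pairwise related by the 1-neighbor relation, they all share a single vertex $v$ of $G$ as their only vertex of $G$. Let $e_1, e_2, e_3, e_4$ be the four edges at $v$ whose angular sectors consecutively contain $f_1, f, f_2$ (so $f_1$ lies between $e_1, e_2$, $f$ between $e_2, e_3$, and $f_2$ between $e_3, e_4$). Each $e_i$ must carry at least one crossing, since otherwise the adjacent 1-triangle would lack its third corner; let $z_i$ be the first crossing on $e_i$ from $v$.

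The crucial step is to identify a single edge $\alpha$ of $G$ whose segments form the three opposite sides $(z_1, z_2), (z_2, z_3), (z_3, z_4)$ of $f_1, f, f_2$ in $P(\Gamma)$. The segment $(z_1, z_2)$ is part of the unique edge of $G$ that crosses $e_1$ at $z_1$ and $e_2$ at $z_2$; call this edge $\alpha$. Since the crossing at $z_2$ is simple, the opposite side $(z_2, z_3)$ of $f$ must lie on $\alpha$ continuing past $z_2$, and likewise $(z_3, z_4)$ on $\alpha$ past $z_3$. Hence $\alpha$ carries the four crossings $z_1, z_2, z_3, z_4$, and by 4-planarity these are all of its crossings. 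The segment of $\alpha$ past $z_1$ in the direction opposite $z_2$ therefore terminates at a vertex $w$ of $G$ without further crossings.

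Now $g$ is the face across $(v, z_1)$ from $f_1$, so its boundary contains the consecutive corners $v, z_1, w$, where $v$ and $w$ are vertices of $G$ (counted with multiplicity) and $z_1$ is a crossing. Hence $v(g) \ge 2$, and by \cref{pro:fill} and \cref{pro:cr-minimal} the face $g$ is either a 2-triangle or a 3-triangle. Since every corner of a 3-triangle is a vertex of $G$, the crossing $z_1$ on the boundary of $g$ rules this out, forcing $g$ to be a 2-triangle. The same argument, applied symmetrically, handles the other 1-neighbor of $f_1$ and both 1-neighbors of $f_2$.

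The main delicate point I expect is pinning down $\alpha$ as a single edge of $G$, which requires a careful trace across the simple crossings at $z_2$ and $z_3$ combined with the fact that $f_1, f, f_2$ are all 1-triangles at the same vertex; once this is established, the saturation of the 4-planarity bound by $\alpha$, together with the face-classification propositions, finishes the argument immediately.
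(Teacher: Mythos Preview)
Your proof is correct and takes essentially the same approach as the paper: trace the single edge $\alpha$ forming the opposite sides of $f_1, f, f_2$, use 4-planarity to see that $\alpha$ terminates at a vertex $w$ immediately past $z_1$, and then invoke the face-classification propositions to force $g$ to be a 2-triangle. The paper's own proof is a one-liner that refers to a figure and cites only \cref{pro:fill}; your version spells out the same geometry explicitly (and rightly also invokes \cref{pro:cr-minimal}, which the paper uses tacitly). One small wording slip: in your final sentence, $f_1$ has only one 1-neighbor other than $f$ (namely $g$, already handled), so symmetry is only needed for the 1-neighbor of $f_2$ other than $f$.
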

\begin{proof}
    A face $f$ receiving charge in \sii{} is a 1-triangle with two 1-neighbors $f_1, f_2$ that are 1-triangles, see \cref{fig:fig:discharging-preparation-a}. Then the statement is a consequence of \cref{pro:fill}.
\end{proof}
\begin{figure}[t]
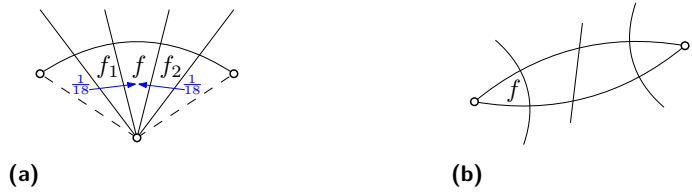

    \hfil
    \begin{subfigure}[b]{0.24\linewidth}
    \center
        \includegraphics[page=2, width = \linewidth]{discharging4planar.pdf}
        \subcaption{}
        \label{fig:fig:discharging-preparation-a}
    \end{subfigure}  
    \hfil
    \begin{subfigure}[b]{0.24\linewidth}
    \center
        \includegraphics[page=1, width = \linewidth]{discharging4planar.pdf}
        \subcaption{}
        \label{fig:fig:discharging-preparation-b}
    \end{subfigure} 
    \hfil
    \caption{(a) In \sii, $f$ can receive $\frac{1}{18}$ charge from two 2-triangles each. (b) A 1-triangle $f$ with a wedge-neighbor that is another 1-triangle creates homotopic edges.}
    \label{fig:discharging-preparation}
\end{figure}

\begin{proposition}\label{pro:wedge-neighbors}
    Only 1-quadrilaterals and 0-faces $f$ with $\vert f \vert \ge 5$ contribute in \siii.
\end{proposition}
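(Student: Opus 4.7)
The plan is to enumerate the possible face types of the wedge-neighbor $f_i$ using the classification from \cref{pro:small-faces,pro:fill,pro:cr-minimal,pro:cr-minimal-2}, and then to rule out every type except 1-quadrilaterals and 0-faces with $\vert f \vert \ge 5$.

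The key observation is that two faces that are 0-neighbors must share an edge of $P(\Gamma)$ whose \emph{both} endpoints are crossings: otherwise one endpoint would be a vertex of $G$ lying on both face boundaries, contradicting the ``share $0$ vertices'' condition. Since the wedge-neighbor $f_i$ is reached from the 1-triangle $f_0$ through a chain of 0-neighbor relations --- first $f_0$ to $f_1$, then through the intermediate 0-quadrilaterals $f_2, \dots, f_{i-1}$ --- this property applies in particular to the edge of $P(\Gamma)$ that $f_i$ shares with its predecessor $f_{i-1}$.

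I would now rule out every remaining face type. A 2-triangle or 3-triangle has every boundary edge of $P(\Gamma)$ incident to at least one vertex of $G$, so it cannot be a 0-neighbor of anything. A 0-triangle would, by \cref{pro:0-triangle}, be central in some $H^*$, whose 0-neighbors are all 0-pentagons by construction; but $f_{i-1}$ is either a 1-triangle or a 0-quadrilateral, a contradiction. 0-quadrilaterals are excluded directly by the definition of wedge-neighbor. Finally, if $f_i$ were a 1-triangle, then $f_0$ and $f_i$ would be two 1-triangles at the ends of a 0-quadrilateral chain, and tracing the edges along the two sides of this chain yields a pair of edges of $\Gamma$ that are homotopic (as depicted in \cref{fig:fig:discharging-preparation-b}), contradicting our setup on the drawing. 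The only remaining possibilities are a 1-quadrilateral and a 0-face with $\vert f \vert \ge 5$, which is exactly the claimed statement.

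The main obstacle is the 1-triangle case, which relies on a geometric argument one has to make explicit: the two edges of $f_0$ incident to its $G$-vertex $v_0$ extend through the 0-quadrilateral chain and reach the $G$-vertex $v_i$ of $f_i$, thereby producing two edges of $G$ sharing both endpoints and enclosing no further vertex, and hence homotopic in $\Gamma$. The other three exclusions are essentially immediate given the 0-neighbor observation, \cref{pro:0-triangle}, and the definition of the wedge-neighbor.
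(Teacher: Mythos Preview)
Your proof is correct and follows essentially the same route as the paper: enumerate the face types via \cref{pro:small-faces,pro:fill,pro:cr-minimal,pro:cr-minimal-2}, eliminate 2- and 3-triangles because they have no boundary edge with two crossing endpoints, eliminate 0-quadrilaterals by definition, and rule out 1-triangles via the homotopic-pair argument of \cref{fig:fig:discharging-preparation-b}. The only difference is that you treat the 0-triangle case explicitly through \cref{pro:0-triangle}, whereas the paper silently drops it because the discharging rules are stated only for faces outside an $H^*$ and the wedge cannot enter an $H^*$ through a 0-neighbor edge; your version is slightly more self-contained on this point.
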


\begin{proof}
    2-triangles and 3-triangles do not have a 0-edge, which is necessary to be a wedge-neighbor, and 0-quadrilaterals are per definition no wedge-neighbors. It suffices to observe that by \cref{pro:small-faces,pro:fill,pro:cr-minimal,pro:cr-minimal-2} the only other possible face is a 1-triangle, which would create homotopic multi-edges, see \cref{fig:fig:discharging-preparation-b}.
\end{proof}

\begin{proposition}\label{pro:1-triangle}
    For all 1-triangles $f$, we have $\ch_2(f) \ge \frac{1}{9}$.
\end{proposition}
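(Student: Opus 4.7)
The plan is to verify $\ch_2(f) \ge \frac{1}{9}$ by tracking the charge a 1-triangle $f$ collects in Steps~1 and~2 while confirming that $f$ itself never donates. The initial charge is $\ch(f) = 3 + 1 - 4 = 0$. In Step~1 only 2-triangles and 1-quadrilaterals donate, and by \cref{pro:step2} the Step~2 donors are 2-triangles; in particular $f$ never donates and appears solely as a recipient. Hence it suffices to bound the charge $f$ receives, via a case analysis on $f$'s two 1-neighbors.

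To set up that analysis, I would first classify the 1-neighbors. They sit across the two boundary edges of $f$ incident to $f$'s unique vertex $v$; each such shared edge runs from $v$ to a crossing, so it is not a planar edge. Thus no 3-triangle (whose boundary is entirely planar) can be a 1-neighbor of $f$, and \cref{pro:small-faces,pro:fill,pro:cr-minimal,pro:cr-minimal-2} leave only 1-triangles, 2-triangles, and 1-quadrilaterals as possibilities.

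If at least one of the two 1-neighbors is a 2-triangle or a 1-quadrilateral, Step~1 alone already transfers at least $\frac{1}{9}$ to $f$: the mixed 2-triangle/1-quadrilateral case still sends $\frac{1}{9}$ from the 2-triangle via the rule's exception, two 2-triangles or two 1-quadrilaterals contribute $\frac{2}{9}$, and a single qualifying neighbor paired with a 1-triangle still delivers $\frac{1}{9}$. These subcases are routine.

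The main obstacle is the remaining configuration in which both 1-neighbors $f_1, f_2$ are 1-triangles, so Step~1 contributes nothing and all of the required $\frac{1}{9}$ must come from Step~2. The Step~2 rule transfers $\frac{1}{18}$ to $f$ from each 1-neighbor of $f_1, f_2$ distinct from $f$, namely the other 1-neighbors $f_1'$ of $f_1$ and $f_2'$ of $f_2$; my task will be to show these are 2-triangles, as promised by \cref{pro:step2}. To that end I would argue structurally at $v$: letting the edges of $G$ at $v$ bounding $f_2, f, f_1$ in cyclic order be $\xi, \gamma, \alpha, \beta$, a crossing-matching argument at the first crossings on $\gamma$ and $\alpha$ forces the 0-edges of all three 1-triangles to coincide in a single edge $e$, which then crosses each of $\xi, \gamma, \alpha, \beta$; this saturates 4-planarity, so the arc of $e$ past its crossing $c_2$ with $\beta$ must end at a vertex $u$. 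Following the boundary of $f_1'$ from $v$ along $\beta$ to $c_2$ and then along $e$ reaches $u$, giving $v(f_1') \ge 2$; since $(v, c_2)$ is not planar, $f_1'$ cannot be a 3-triangle, so \cref{pro:fill} forces $v(f_1') = 2$ and \cref{pro:cr-minimal} identifies $f_1'$ as a 2-triangle. The symmetric argument handles $f_2'$, yielding $f$ a total of $2 \cdot \frac{1}{18} = \frac{1}{9}$ in Step~2.
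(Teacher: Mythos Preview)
Your proof is correct and follows the same approach as the paper: compute $\ch(f)=0$, observe that $f$ never donates, classify the two 1-neighbors as 1-triangles, 2-triangles, or 1-quadrilaterals, and split into the case where \si{} already supplies $\tfrac{1}{9}$ versus the case where both 1-neighbors are 1-triangles and \sii{} must do the work.

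The only difference is in that last case. The paper simply invokes \cref{pro:step2} once more: since the donors in \sii{} are precisely the 1-neighbors $f_1',f_2'$ of $f_1,f_2$ distinct from $f$, and \cref{pro:step2} asserts that all \sii{} donors are 2-triangles, the two contributions of $\tfrac{1}{18}$ are immediate. You instead re-derive this inline via the structural argument about the common $0$-edge $e$ saturating its four crossings on $\xi,\gamma,\alpha,\beta$. That argument is correct (and is essentially the content behind the paper's one-line proof of \cref{pro:step2} via \cref{pro:fill}), but it is redundant here: you already cited \cref{pro:step2} in your first paragraph, so you could have finished the proof in one sentence as the paper does.
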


\begin{proof}
    The initial charge of a 1-triangle $f$ is $\ch(f) = 0$. 1-triangles do not contribute charge in \si{} and by \cref{pro:step2}, this also holds for \sii. The 1-neighbors of $f$ are 1-triangles, 2-triangles or 1-quadrilaterals. So $f$ receives either at least $\frac{1}{9}$ charge in \si{} or $\frac{1}{18}$ charge from two 2-triangles each in \sii, which establishes the claim.
\end{proof}

\begin{proposition}\label{pro:easy-faces}
    After \siii, all faces except 0-pentagons are satisfied. $0$-faces $f$ with $\vert f \vert \ge 6$ have an excess of at least $\frac{1}{9}\vert f \vert$ after \siii.
\end{proposition}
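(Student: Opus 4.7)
The plan is to enumerate all face types permitted by \cref{pro:small-faces,pro:fill,pro:cr-minimal,pro:cr-minimal-2} (aside from 0-triangles, which are already handled through $H^*$ by \cref{pro:0-triangle}) and, for each type, bound the total charge it may lose during \si--\siii. The easy types will be 1-triangles, 3-triangles, and 0-quadrilaterals. A 1-triangle satisfies $\ch_3(f) = \frac{1}{3}$ by the very definition of \siii{} together with \cref{pro:1-triangle}. A 3-triangle (initial charge $2$) contributes in none of \si--\siii, since \si{} only accepts 2-triangles and 1-quadrilaterals as contributors, \cref{pro:step2} restricts \sii{} to 2-triangles, and \cref{pro:wedge-neighbors} restricts \siii{} to 1-quadrilaterals and 0-faces with $\vert f \vert \ge 5$. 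A 0-quadrilateral (initial charge $0$) is excluded from all three contributor lists for the same reasons.

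Next I would bound the transfers from 2-triangles and 1-quadrilaterals edge by edge. A 2-triangle has exactly two 1-edges; across each of them it can lose $\frac{1}{9}$ in \si{} and, through the adjacent 1-triangle, at most $\frac{1}{18}$ in \sii, for a total of at most $2 \cdot (\frac{1}{9} + \frac{1}{18}) = \frac{1}{3}$. Hence $\ch_3 \ge \frac{2}{3}$. A 1-quadrilateral has two 1-edges and two 0-edges; its \si{} loss is at most $\frac{2}{9}$, and since each 0-edge supports at most one incoming wedge chain, \siii{} costs at most $2 \cdot (\frac{1}{3} - \frac{1}{9}) = \frac{4}{9}$ by \cref{pro:1-triangle}. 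So $\ch_3 \ge 1 - \frac{2}{3} = \frac{1}{3}$.

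For a 0-face $f$ with $\vert f \vert \ge 6$, the boundary contains no 1-edge, so $f$ only contributes in \siii{} and only as a wedge-neighbor. Each of its $\vert f \vert$ 0-edges supports at most one incoming wedge chain, and \cref{pro:1-triangle} bounds each such transfer by $\frac{2}{9}$. Thus
\[
\ch_3(f) \ge (\vert f \vert - 4) - \frac{2}{9}\vert f \vert = \frac{7}{9}\vert f \vert - 4,
\]
which is at least $\frac{1}{9}\vert f \vert$ exactly when $\vert f \vert \ge 6$, as required.

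The step that needs the most care is the combinatorial bookkeeping behind the wedge-neighbor count: I have to check that tracing a wedge chain backwards from any 0-edge of $f$ leads to at most one 1-triangle, so that $f$ serves as wedge-neighbor through that edge to at most one 1-triangle. The remaining observations (in particular, that the \si{} subrule favouring 2-triangles over 1-quadrilaterals only ever decreases, never increases, a face's outgoing charge) are routine.
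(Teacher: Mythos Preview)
Your proposal is correct and follows essentially the same approach as the paper's proof: both enumerate the admissible face types from \cref{pro:small-faces,pro:fill,pro:cr-minimal,pro:cr-minimal-2}, invoke \cref{pro:step2,pro:wedge-neighbors,pro:1-triangle} to bound the outgoing charge in each step, and arrive at the identical numerical estimates (e.g.\ $\ch_3 \ge \tfrac{2}{3}$ for 2-triangles, $\ch_3 \ge \tfrac{1}{3}$ for 1-quadrilaterals, $\ch_3 \ge \tfrac{7}{9}\vert f\vert - 4$ for large 0-faces). Your per-edge organisation for the 2-triangle case is a cosmetic variation of the paper's per-step organisation; the caution in your last paragraph about wedge chains is unnecessary, since the wedge of a 1-triangle is uniquely determined by the paper's definition, so each 0-edge of $f$ is the terminal edge of at most one wedge.
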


\begin{proof}
Recall that by \cref{pro:small-faces,pro:fill,pro:cr-minimal,pro:cr-minimal-2} only $0$-faces, 1-triangles, 1-quadrilaterals, 2-triangles and 3-triangles exist in $P(\Gamma)$. By assumption, there are no 0-triangles, and by \cref{pro:wedge-neighbors,pro:1-triangle}, the claim holds for 1-triangles. For the following analysis, we use \cref{pro:step2,pro:wedge-neighbors}:
\begin{itemize}
    \item 2-triangles $f$: $\ch(f)=1$, $\ch_1(f) \ge 1- \frac{2 \cdot1}{9} = \frac79$ and $\ch_3(f) = \ch_2(f) \ge \frac79-\frac{2 \cdot 1}{18} = \frac 6 9 \ge \frac 1 3 \cdot 2$.
    \item 3-triangles $f$: $\ch(f)=2$ and $\ch_3(f) = 2 \ge \frac 1 3 \cdot 3$.
    \item 0-quadrilaterals $f$: $\ch(f)=0$ and $\ch_3(f) = 0 \ge \frac 1 3 \cdot 0$.
    \item 1-quadrilaterals $f$: $\ch(f)=1$ and $\ch_2(f) = \ch_1(f) \ge 1 - \frac{2\cdot 1}{9} = \frac{7}{9}$. As $f$ contributes through at most two edges in \siii, we have $\ch_3(f) \ge \frac79-\frac{2\cdot2}9 =\frac{3}{9} \ge \frac{1}{3} \cdot 1$.
    \item $0$-faces $f$ with $\vert f \vert \ge 6$: $\ch(f) = \vert f \vert -4 = \ch_2(f)$ and $\ch_3(f) \ge \vert f \vert-4- \frac29 \vert f \vert$. Thus, $\ch_3(f) \ge 0 + \frac{1}{9} \vert f \vert$ holds.
\end{itemize} 
\end{proof}
\begin{proposition}\label{pro:0-pentagons}
    After \sv, all 0-pentagons are satisfied.
\end{proposition}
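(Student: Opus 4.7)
The plan is to show $\ch_5(f)\ge 0=\tfrac{1}{3}v(f)$ for every 0-pentagon $f$, by first bounding its loss in \siii{} and then showing that any residual deficit is compensated by vertex-neighbors in \siv{} and \sv.

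Since $f$ is a 0-face distinct from a 2-triangle or 1-quadrilateral, it neither gives nor receives charge in \si{} and \sii{} by \cref{pro:step2}, so $\ch_2(f)=\ch(f)=5-4=1$. In \siii, $f$ donates only to 1-triangles for which it is the wedge-neighbor. I would first observe that the wedge path is deterministic backward from its terminal edge (through the opposite edges of the intermediate 0-quadrilaterals), so distinct wedges reach $f$ through distinct edges; hence at most five 1-triangles draw from $f$. By \cref{pro:1-triangle}, each receives at most $\tfrac{1}{3}-\tfrac{1}{9}=\tfrac{2}{9}$, giving $\ch_3(f)\ge 1-5\cdot\tfrac{2}{9}=-\tfrac{1}{9}$.

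It therefore suffices to cover a deficit of at most $\tfrac{1}{9}$. I would proceed by a case analysis on the number $k$ of wedge-neighbor 1-triangles of $f$. For $k\le 4$, already $\ch_3(f)\ge \tfrac{1}{9}\ge 0$ and \siv, \sv{} only increase $f$'s charge. The critical case is $k=5$, where every edge of $f$ is terminal for a wedge ending in a \emph{hungry} 1-triangle. Here I would argue structurally, using the face-type list \cref{pro:small-faces,pro:fill,pro:cr-minimal,pro:cr-minimal-2}, \cref{pro:wedge-neighbors}, and the local layout forced by having a wedge on each of $f$'s five bounding edges, that the five vertex-neighbors of $f$ (one opposite $f$ at each crossing) cannot all be zero-excess faces; at least one is a 0-face of size $\ge 6$, which by \cref{pro:easy-faces} has excess at least $\tfrac{|f'|}{9}$. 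Its excess is distributed equally over its 0-pentagon vertex-neighbors in \siv, delivering $f$'s share directly, or---via a relay through a 0-pentagon that received excess in \siv---in \sv.

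The main obstacle is the $k=5$ case: excluding the configuration where every vertex-neighbor of $f$ is one of the zero-excess face types (0-quadrilateral, 1-triangle, or a tight 2-triangle/1-quadrilateral), and then verifying quantitatively that the portion of a positive-excess donor's charge that actually reaches $f$ in \siv{} or \sv{} meets the $\tfrac{1}{9}$ deficit. This requires a careful local analysis of the edge types at $f$'s five crossings together with the splitting rule of \siv.
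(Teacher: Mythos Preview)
Your setup is correct: $\ch_2(f)=1$, at most five wedge-donations of $\tfrac{2}{9}$ each, and the only problematic case is $k=5$ with deficit $\tfrac{1}{9}$. But the resolution you propose for the critical case is wrong. You assert that in the $k=5$ situation at least one vertex-neighbor of $f$ must be a 0-face of size $\ge 6$. This is false: in the paper's Case~5, \emph{all five} vertex-neighbors of $f$ are 0-pentagons; in Case~4, they are a mixture of 1-triangles and 0-pentagons; in Cases~1--3 they include 2-triangles, 1-quadrilaterals, or 0-quadrilaterals. Large 0-faces need not appear at all. The paper explicitly disposes of the large-0-face scenario in one line and then spends the entire remainder of the proof on the situation where every vertex-neighbor is small.

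What actually makes the argument go through is that the small vertex-neighbors can themselves carry positive excess, but only under configurational constraints you have not examined. For instance, a neighboring 0-pentagon $f_i$ can have $\ch_3(f_i)\ge\tfrac{3}{9}$ because two of \emph{its} boundary edges are shared with the adjacent vertex-neighbors $f_{i\pm 1}$ (which are not 1-triangles), so $f_i$ donated through at most three edges in \siii. A 2-triangle vertex-neighbor may have excess $\tfrac{1}{9}$, but if it already spent charge in \sii{} one has to chase the structure further to find another donor. A 1-quadrilateral vertex-neighbor may or may not have donated to both of its wedge-neighbors. None of this follows from the face-type propositions alone; it requires the five-case local analysis the paper carries out, tracking exactly how many 0-pentagon vertex-neighbors each donor has (for the equal-split rule in \siv) and sometimes invoking 4-planarity or the no-homotopic-edges condition to rule out configurations.

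A minor side remark: your claim that \siv{} and \sv{} ``only increase $f$'s charge'' is not literally true, since a 0-pentagon with positive excess also \emph{gives} in those steps; the correct statement is that only excess is given away, so the charge never drops below $0$.
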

\begin{proof}
    Let $f$ be a 0-pentagon. If at most four wedge-neighbors of $f$ are 1-triangles, then we have $\ch_3(f) \ge \frac19$. So assume that all five wedge-neighbors of $f$ are 1-triangles, thus $\ch_3(f) \ge -\frac19$. By \cref{pro:easy-faces}, $f$ receives enough charge in \siv if a vertex-neighbor is a large 0-face. So assume all vertex-neighbors of $f$ are 1-triangles, 2-triangles, 0-quadrilaterals, 1-quadrilaterals or 0-pentagons.

    We introduce some notation for such 0-pentagons: Let $e_i \in E$ for $0 \le i \le 4$ be the edges that form $f$, let 1-triangle $t_i$ be the wedge-neighbor of $f$ at $e_i$ and denote by $v_i$ its vertex. Denote the vertex-neighbors of $f$ at the crossing of $e_i$ and $e_{i+1 \mod 5}$ by $f_i$.
    \begin{itemize}
        \item \emph{Case 1: There is a 2-triangle that is a vertex-neighbor of $f$.} W.l.o.g. let $f_0$ be a 2-triangle. If $\ch_3(f_0) \ge \frac{7}{9}$, then $f_0$ has an excess of $\frac19$ and $\ch_4(f) \ge 0$. If $\ch_3(f_0) < \frac{7}{9}$, then $f_0$ contributes charge in \sii, implying that w.l.o.g. $f_4$ is a 1-triangle whose 1-neighbors both are 1-triangles, see \cref{fig:fig:discharging-1-a}. Then, by the 4-planarity of $e_3$, $f_1$ is also a 2-triangle. So $\ch_2(t_1) = \frac29$ and $f$ contributes only $\frac{1}{9}$ charge to $t_1$ in \siii{}. Thus, $\ch_3(f) \ge 0$.
    \end{itemize}
    In the following, we can assume that wedges of all $t_i$ contain at most one 0-quadrilateral, as otherwise one vertex--neighbor of $f$ is a 2-triangle and we can refer to Case 1.
\begin{figure}[t]
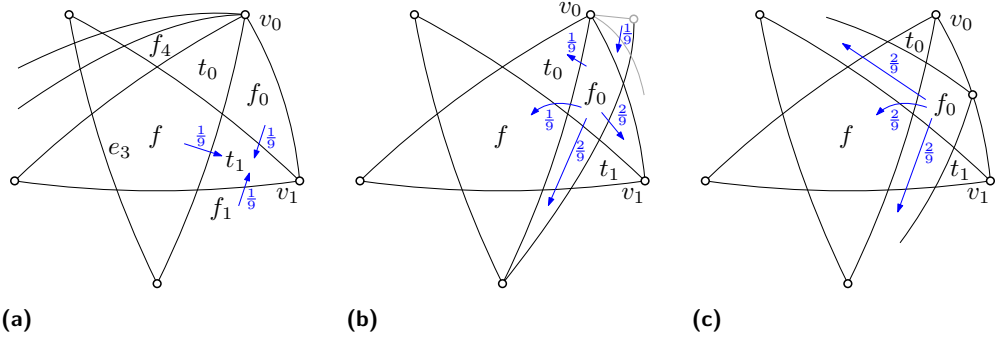

    \hfil
    \begin{subfigure}[b]{0.29\linewidth}
    \center
        \includegraphics[page=3, width = \linewidth]{discharging4planar.pdf}
        \subcaption{}
        \label{fig:fig:discharging-1-a}
    \end{subfigure}  
    \hfil
    \begin{subfigure}[b]{0.29\linewidth}
    \center
        \includegraphics[page=4, width = \linewidth]{discharging4planar.pdf}
        \subcaption{}
        \label{fig:fig:discharging-1-b}
    \end{subfigure} 
    \hfil
    \begin{subfigure}[b]{0.29\linewidth}
    \center
        \includegraphics[page=5, width = \linewidth]{discharging4planar.pdf}
        \subcaption{}
        \label{fig:fig:discharging-1-c}
    \end{subfigure} 
    \hfil
        \caption{Illustrations for (a) Case 1 and (b)--(c) Case 2.}
        \label{fig:discharging-1}
\end{figure}
    \begin{itemize}
        \item \emph{Case 2: There is a vertex-neighbor of $f$ that is a 1-quadrilateral.} W.l.o.g. let $f_0$ be a 1-quadrilateral.
        \begin{itemize}
            \item \emph{Case 2.1: The vertex of $f_0$ is $v_0$ or $v_1$.} By symmetry, assume that the vertex of $f_0$ is $v_0$. Observe that at most two vertex-neighbors of $f_0$ are possibly 0-pentagons (the third vertex-neighbor is the 1-triangle $t_1$). If $f_0$ does not contribute charge to both its wedge-neighbors, then it has an excess of $\frac{2}{9}$ after \siii{} and so $\ch_4(f) \ge 0$. Otherwise, we have the configuration depicted in \cref{fig:fig:discharging-1-b} and $f$ is the only vertex-neighbor of $f_0$ that is a 0-pentagon. Here, $f_0$ does not contribute charge in \si{} to its 1-neighbor that is not $t_0$ (if it is a 1-triangle, then another 2-triangle pays). Therefore, $\ch_3(f_0) = \frac{4}{9}$ and $f_0$ can contribute its excess of $\frac{1}{9}$ charge to $f$ in \siv.
            \item \emph{Case 2.2: The vertex of $f_0$ is neither $v_0$ nor $v_1$.}  Observe that $f$ is the only vertex-neighbor of $f_0$ that is a 0-pentagon. As a second crossing of the wedges of $t_0,t_1$ would allow a reduction to Case 1, we can assume that the 1-neighbors of $f_0$ are 2-triangles, see \cref{fig:fig:discharging-1-c}. Therefore, $f_0$ does not contribute charge in \si{} and has an excess of at least $\frac{2}{9}$ after \siii. Thus, $f$ is satisfied after \siv.
        \end{itemize}
    
        \item \emph{Case 3: There is a vertex-neighbor of $f$ that is a 0-quadrilateral.} W.l.o.g. let $f_0$ be a 0-quadrilateral.
         \begin{itemize}
            \item \emph{Case 3.1: $f_1$ or $f_4$ is a 0-quadrilateral.} W.l.o.g. let $f_1$ be a 0-quadrilateral. Let $e$ be the edge crossing the wedge of $t_1$, see \cref{fig:fig:discharging-2-a}. As no homotopic multi-edges are allowed, w.l.o.g. the face $f'$ that is a 1-neighbor of $t_0$ and a 0-neighbor of $f_0$ is a 2-face. But $f'$ can not be a 2-triangle, which is a contradiction to \cref{pro:cr-minimal}.         
\begin{figure}[t]
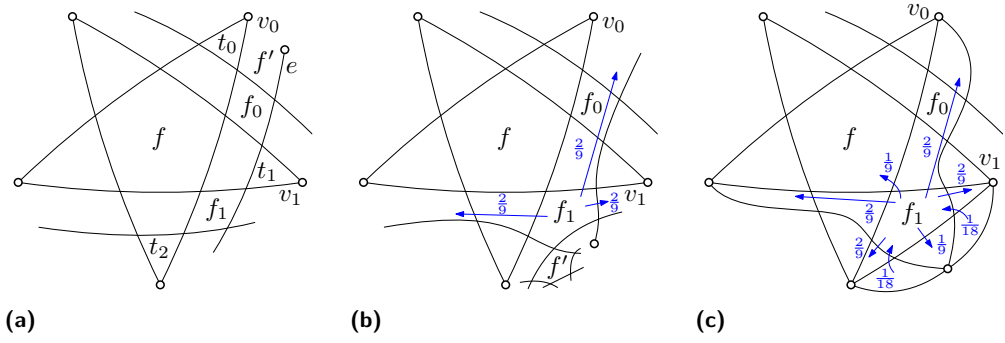

    \hfil
    \begin{subfigure}[b]{0.29\linewidth}
    \center
        \includegraphics[page=6, width = \linewidth]{discharging4planar.pdf}
        \subcaption{}
        \label{fig:fig:discharging-2-a}
    \end{subfigure}  
    \hfil
    \begin{subfigure}[b]{0.29\linewidth}
    \center
        \includegraphics[page=7, width = \linewidth]{discharging4planar.pdf}
        \subcaption{}
        \label{fig:fig:discharging-2-b}
    \end{subfigure} 
    \hfil
    \begin{subfigure}[b]{0.29\linewidth}
    \center
        \includegraphics[page=8, width = \linewidth]{discharging4planar.pdf}
        \subcaption{}
        \label{fig:fig:discharging-2-c}
    \end{subfigure} 
    \hfil
        \caption{Illustrations for (a) Case 3.1 and (b)--(c) Case 3.2.}
        \label{fig:discharging-2}
\end{figure}
            \item \emph{Case 3.2: $f_1$ or $f_4$ is a 0-pentagon.} W.l.o.g. let $f_1$ be a 0-pentagon. Observe that if $f_1$ has a vertex-neighbor $f'\ne f$ that is a 0-pentagon, then $\ch_3(f_1) \ge \frac{3}{9}$ and $f_1$ can contribute enough charge to $f$ in \siv, see \cref{fig:fig:discharging-2-b}.
            Otherwise, $\ch_3(f_1) = \frac{1}{9}$ implies $\ch_4(f)\ge 0$, and therefore we can assume for the critical case that all wedge-neighbors of $f_1$ are 1-triangles. Here, only the configuration depicted in \cref{fig:fig:discharging-2-c} exists in that the wedge of $t_1$ is not crossed twice. In this configuration, $\ch_3(f_1) = 0$ and $\ch_4(f_1)\ge \frac{1}{9}$, which is enough for $f$ in \sv.
            
            \item \emph{Case 3.3: Both $f_1$ and $f_4$ are 1-triangles.} 
            Then $f_2,f_3$ can not be 0-faces. Also, at most one of them is a 1-triangle, as otherwise there would be an edge homotopic to $e_3$, see \cref{fig:fig:discharging-3-a}. Therefore, $f_2$ or $f_3$ is a larger face (which must be a 1-quadrilateral by \cref{pro:fill,pro:cr-minimal,pro:cr-minimal-2}) and we can refer to Case 2.
        \end{itemize}
\end{itemize}
\begin{figure}[t]
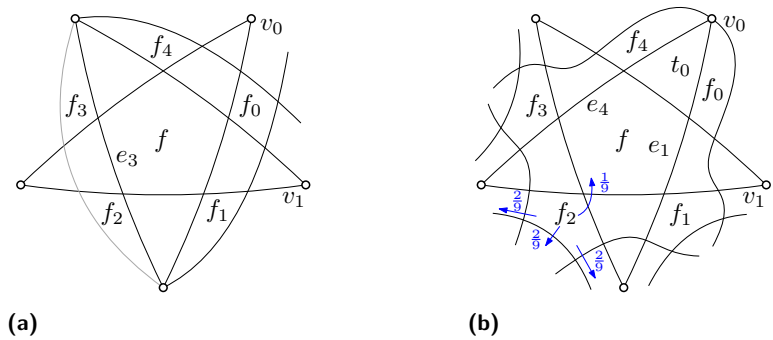

    \hfil
    \begin{subfigure}[b]{0.29\linewidth}
    \center
        \includegraphics[page=9, width = \linewidth]{discharging4planar.pdf}
        \subcaption{}
        \label{fig:fig:discharging-3-a}
    \end{subfigure}  
    \hfil
    \begin{subfigure}[b]{0.29\linewidth}
    \center
        \includegraphics[page=10, width = \linewidth]{discharging4planar.pdf}
        \subcaption{}
        \label{fig:fig:discharging-3-b}
    \end{subfigure} 
    \hfil
    \caption{Illustrations for (a) Case 3.3 and (b) Case 4.}
    \label{fig:discharging-3}
\end{figure}
From now on, we can assume that all vertex-neighbors of $f$ are 1-triangles and 0-pentagons.
\begin{itemize}        
        \item \emph{Case 4: There is a vertex-neighbor of $f$ that is a 1-triangle.} W.l.o.g. let $f_0$ be a 1-triangle so that the vertex of $f_0$ is $v_0$.    
        Then $f_4$ is also a 1-triangle. Further, $f_1$ and $f_3$ are 0-pentagons, as otherwise they would be 1-triangles causing edges homotopic to $e_1$ and $e_4$. Finally, this implies that $f_2$ is a 0-pentagon, see \cref{fig:fig:discharging-3-b}. As $f_2$ does not contribute charge to $f_1, f_3$ in \siii, we have $\ch_3(f_2) \ge \frac39$. Here, at most three vertex-neighbors of $f_2$ are 0-pentagons and therefore $f$ can receive at least $\frac{1}{9}$ charge in \siv.
        \item \emph{Case 5: All vertex-neighbors of $f$ are 0-pentagons.}
        For all $f_i$, again we have $\ch_3(f_i) \ge \frac{3}{9}$ and $f$ is satisfied after \siv.
    \end{itemize}
\end{proof}
The combination of \cref{pro:easy-faces} and \cref{pro:0-pentagons} closes the proof of \cref{th:4-planar}.
\end{proof}

\section{Open questions}
We demonstrated how the case analysis in the original proof by Ackerman \cite{DBLP:journals/comgeo/Ackerman19} can be shortened. So one may ask if the complexity of this proof can be reduced further, e.g., by applying the density formula \cite{kaufmann2024density}. Further, our method may open up new possibilities to characterize optimal 4-planar graphs. This approach may lead to improved density bounds also for k-planar graphs for larger $k \ge5$, and also to better constants in the Crossing Lemma.

\bibliography{biblio.bib}

\end{document}